\definecolor{darkblue}{rgb}{0,0.08,0.45} 
\newcommand{\R}{\mathbb{R}}
\newcommand{\cD}{{\mathcal{D}}}
\newcommand{\cH}{{\mathcal{H}}}
\renewcommand{\S}{\mathbb{S}}
\newcommand{\x}{\mathbf{x}}
\newcommand{\y}{\mathbf{y}}
\newcommand{\z}{\mathbf{z}}
\renewcommand{\epsilon}{\varepsilon}
\theoremstyle{plain}
\newtheorem{thm}[equation]{Theorem}
\newtheorem{lemma}[equation]{Lemma}
\theoremstyle{definition}
\newtheorem{remark}[equation]{Remark}
\numberwithin{equation}{section}
\author{Tuomas Orponen and Tuomas Sahlsten}\thanks{Both authors are supported by the Finnish Centre of Excellence in Analysis and Dynamics Research and the second author acknowledges the support from Emil Aaltonen S\"{a}\"{a}ti\"{o}.}
\title{Radial projections of rectifiable sets}
\address{Department of Mathematics and Statistics, University of Helsinki, P.O.B. 68, FI-00014 Helsinki, Finland}
\email{tuomas.orponen@helsinki.fi} \email{tuomas.sahlsten@helsinki.fi}
\subjclass[2000]{28A75 (Primary); 28A78, 28A80 (Secondary)}
\begin{document}

\begin{abstract} We show that if no $m$-plane contains almost all of an $m$-rectifiable set $E \subset \R^{n}$, then there exists a single $(m - 1)$-plane $V$ such that the radial projection of $E$ has positive $m$-dimensional measure from every point outside $V$.
\end{abstract}

\maketitle 

\section{Introduction}

The purpose of this note is to investigate the relation between rectifiability and \textbf{radial projections} $\pi_{x} \colon \R^{n} \setminus \{x\} \to \S^{n - 1}$, $x \in \R^{n}$, defined by the expression $\pi_{x}(y) = (y - x)/|y - x|$. Motivated by the famous result of Besicovitch and Federer concerning orthogonal projections (see \cite[Theorem 18.1]{Mat95}), one has reason to anticipate that the radial projections $\pi_{x}(E)$ of $m$-rectifiable sets $E \subset \R^{n}$ with $\cH^{m}(E) > 0$ ought to have positive $\cH^{m}$ measure for almost all parameters $x \in \R^{n}$, whereas the opposite behaviour should be manifest in the purely unrectifiable case. The problem of verifying this intuition in the unrectifiable case appears to be rather involved, and only partial results are available (see the discussion and references in section \ref{unrectifiable}); this note complements those results by settling the rectifiable case. We show that if no $m$-plane contains almost all of an $m$-rectifiable set $E \subset \R^{n}$ with $\cH^{m}(E) > 0$, then $\cH^{m}$ almost all radial projections of $E$ have positive $m$-dimensional measure. We also provide an accurate 'worst case' description of the geometry of the \textbf{exceptional set} $\{x \in \R^{n} : \cH^{m}(\pi_{x}(E)) = 0\}$. Our studies were initiated by a question on 'directions of rectifiable sets', raised by Iosevich, Mourgoglou and Senger in the recent preprint \cite{IMS10}: a positive answer to this question is acquired in Remark \ref{ims}.

\section{Preliminaries and the main result}

An $\cH^{m}$ measurable set $E \subset \R^n$ with $\cH^m(E) > 0$ is called $m$-\textbf{rectifiable} if there are countably many $C^1$-embeddings $f_{i} \colon \R^{m} \to \R^{n}$ such that
$$\cH^m\Big(E \setminus \bigcup_{i} f_{i}(\R^{m})\Big) = 0.$$
This definition is equivalent to the classical definition of rectifiability, where we consider coverings by Lipschitz-images instead of $C^1$-manifolds, see \cite[Theorem 15.21]{Mat95}. We include the requirement $\cH^{m}(E) > 0$ in the definition of rectifiability merely to avoid repetition.

We say that a set $E \subset \R^n$ is $m$-\textbf{flat}, if there exists an $m$-plane $T$ such that $E \subset T$. Relaxing this condition slightly, we say that $E$ is \textbf{essentially} $m$-\textbf{flat}, if $\cH^m(E \setminus T) = 0$. Our main result is the following.

\begin{thm} 
\label{thm1}
Let $E \subset \R^{n}$ be an $m$-rectifiable set, which is not essentially $m$-flat. Then $\cH^m(\pi_x(E)) > 0$ for $\cH^m$ almost every $x \in \R^n$.\footnote{Note that $\pi_{x}(E)$ is not, strictly speaking, well defined, if $x \in E$. Here, and in the rest of the paper, the notation $\pi_{x}(E)$ should be interpreted as $\pi_{x}(E \setminus \{x\})$.} Moreover, the exceptional set is always $(m-1)$-flat.
\end{thm}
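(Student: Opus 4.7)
The central idea is to characterize the exceptional set
$$ \mathrm{Exc} := \{x \in \R^{n} : \cH^{m}(\pi_{x}(E)) = 0\} $$
in terms of the approximate tangent planes of $E$, and then to deduce its $(m-1)$-flatness by an elementary affine-geometric argument. By $m$-rectifiability, at $\cH^{m}$-almost every $y \in E$ the set $E$ admits an approximate tangent $m$-plane; denote the corresponding affine plane by $V(y) := y + T(E,y)$. The key claim I would isolate is the following implication $(\star)$: \emph{if $x \in \mathrm{Exc}$, then $x \in V(y)$ for $\cH^{m}$-almost every $y \in E$}.

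To prove $(\star)$, I would decompose $E$ modulo an $\cH^{m}$-null set into countably many bounded Borel pieces $M_{i}$, each contained in a $C^{1}$ $m$-submanifold $N_{i}$ of $\R^{n}$; along $M_{i}$, the affine plane $V(y)$ coincides $\cH^{m}$-almost everywhere with the classical affine tangent plane of $N_{i}$. A short computation shows that, for $y \neq x$, the differential $D\pi_{x}(y)$ is the orthogonal projection onto $(y-x)^{\perp}$ rescaled by $|y-x|^{-1}$. Consequently, the tangential Jacobian of $\pi_{x}|_{N_{i}}$ at $y$ vanishes precisely when $y - x \in T_{y}N_{i}$, which is precisely the condition $x \in V(y)$. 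If $x \in \mathrm{Exc}$, then $\cH^{m}(\pi_{x}(M_{i})) = 0$ for every $i$, and the area formula applied to $\pi_{x}|_{M_{i}}$ (after localizing away from $x$ to ensure integrability of the Jacobian) forces this tangential Jacobian to vanish $\cH^{m}$-a.e.\ on $M_{i}$. Summing over $i$ yields $(\star)$.

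It remains to derive the structural conclusion. Suppose, toward a contradiction, that $\mathrm{Exc}$ is not contained in any $(m-1)$-plane; then it contains $m+1$ affinely independent points $x_{0}, \ldots, x_{m}$ spanning a unique affine $m$-plane $V_{0}$. Applying $(\star)$ to each $x_{j}$ and intersecting the resulting $\cH^{m}$-conull subsets of $E$, one obtains an $\cH^{m}$-conull set $E' \subset E$ on which $\{x_{0}, \ldots, x_{m}\} \subset V(y)$. Because $V(y)$ is itself an affine $m$-plane containing $m+1$ affinely independent points, it must equal $V_{0}$; in particular $y \in V_{0}$ for every $y \in E'$. Hence $E$ is essentially contained in $V_{0}$, contradicting the hypothesis. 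Therefore $\mathrm{Exc}$ lies in an $(m-1)$-plane and so has vanishing $\cH^{m}$-measure, establishing both assertions. The main difficulty is entirely in $(\star)$: one must carefully combine the area formula across the countable decomposition of $E$ while controlling the singularity of $\pi_{x}$ at $y = x$. Once $(\star)$ is in hand, the remainder is a clean affine pigeonhole that delivers the optimal $(m-1)$-plane bound for free.
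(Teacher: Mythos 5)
Your argument is correct, and it reaches the conclusion by a genuinely different route from the paper's. The paper's engine is a local bilipschitz lemma: if $z_0$ is a point of a $C^1$ image where the differential has rank $m$, and $x$ lies off the affine tangent plane at $z_0$, then $\pi_x$ is bilipschitz on a small neighbourhood of $z_0$ in the manifold, so $\cH^m(\pi_x(E)) > 0$ for every such $x$. The theorem then follows by choosing just \emph{two} such tangent planes $T_1 \neq T_2$ (the second at a density point of $E \setminus T_1$, which exists by non-flatness) and noting that the exceptional set lies in $T_1 \cap T_2$, hence in an $(m-1)$-plane. Your claim $(\star)$ encodes the same geometric dichotomy --- $x$ is exceptional only if it lies on the affine tangent plane at $\cH^m$-almost every point of $E$ --- but you prove it softly via the area formula and the observation that $D\pi_x(y)$ degenerates on $T_yN_i$ exactly when $x \in V(y)$, rather than via a quantitative bilipschitz estimate; and you close with an affine pigeonhole on $m+1$ affinely independent points instead of two well-chosen planes. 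Your route buys a cleaner and in fact stronger structural statement (the exceptional set is contained in the intersection of almost all affine tangent planes of $E$), at the cost of invoking the area formula together with the bookkeeping around the singularity of $\pi_x$ at $x$ and the critical set of the parametrizations (the Sard-type removal you correctly flag). The paper's route buys elementarity and quantitative control: explicit bilipschitz constants, hence lower bounds for $\cH^m(\pi_x(E))$, and it also yields directly the pointwise statement used in Remark \ref{ims}. One small caution: defining $V(y)$ via measure-theoretic approximate tangent planes presupposes that $\cH^m$ is locally finite on $E$, which the paper's definition of rectifiability does not assume; this is harmless, since your argument really only uses the classical tangent planes of the covering manifolds $N_i$, which exist regardless, but you should phrase $(\star)$ in those terms.
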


A $0$-plane is, by definition, a singleton in $\R^n$. Note that the result fails for any essentially flat set $E \subset \R^{n}$. Indeed, if $E$ essentially $m$-flat, then we may find an $m$-plane $T$ such that ${\cH^m(E \setminus T) = 0}$. Write
$$\pi_x(E) = \pi_x(E \setminus T) \cup \pi_x(E \cap T), \qquad x \in \R^{n}.$$ 
The first set on the right is $\cH^m$ null, since $\cH^{m}(E \setminus T) = 0$. For every $x \in T$, the second one is a subset of an $(m-1)$-sphere. Hence $\cH^m(\pi_x(E)) = 0$ for every $x \in T$. 

Furthermore, the exceptional set can be an $(m-1)$-plane. For this just take any two $m$-planes $T_1$ and $T_2$ such that $T_1 \neq T_2$ but $V := T_1 \cap T_2$ is non-empty. Now $V$ is an $(m-1)$-plane. Define $E := T_{1} \cup T_{2}$. Then, for every $x \in V$, the radial projection $\pi_x(E)$ consists of two $(m-1)$-spheres. Hence, $\cH^m(\pi_x(E)) = 0$ for every $x \in V$.

\section{The key lemma}

The main result is a consequence of the following lemma, which concerns the local bilipschitz-properties of the radial projection.

\begin{lemma}
\label{lemma1}
Let $f \colon \R^{m} \to \R^{n}$ be a $C^{1}$-embedding, and let $z_o = f(\x_{o})$ be a point such that $\dim f'(\x_{o})\R^{m} = m$. Then, if $x \notin T := z_{o} + f'(x_{o})\R^{m}$, there exists $\delta > 0$ such that $\pi_x|_{f(\R^{m}) \cap B(z_0,\delta)}$ is bilipschitz.
\end{lemma}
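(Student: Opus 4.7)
\medskip\noindent\textbf{Proof plan.} The plan is to linearize the radial projection at $z_o$ and exploit the fact that the hypothesis $x \notin T$ is precisely what makes the composition $\pi_x \circ f$ an immersion at $\x_o$. A direct computation gives
$$D\pi_x(y) = \frac{1}{|y-x|}\left(I - \frac{(y-x)(y-x)^{\top}}{|y-x|^2}\right), \qquad y \neq x,$$
so $\ker D\pi_x(z_o)$ is exactly the line spanned by $z_o - x$. The assumption $x \notin T = z_o + f'(\x_o)\R^m$ is equivalent to $z_o - x \notin f'(\x_o)\R^m$, so $\ker D\pi_x(z_o)$ meets the image of $f'(\x_o)$ only at the origin. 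Consequently $D(\pi_x \circ f)(\x_o) = D\pi_x(z_o) \circ f'(\x_o)$ is an injective linear map from $\R^m$ into $T_{\pi_x(z_o)}\S^{n-1}$.

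The next step is to upgrade this pointwise nondegeneracy to a uniform statement on a small convex neighborhood $U$ of $\x_o$. Both $\x \mapsto f'(\x)$ and $\x \mapsto D\pi_x(f(\x))$ are continuous near $\x_o$ (the latter because $f$ is continuous and $f(\x)$ stays bounded away from $x$ there), and injectivity is an open condition on linear maps. So after shrinking $U$ I can fix constants $0 < c_1 \le c_2 < \infty$ such that
$$c_1 |u| \le |f'(\x) u| \le c_2 |u| \quad \text{and} \quad c_1 |u| \le |D(\pi_x \circ f)(\x) u| \le c_2 |u|$$
for every $\x \in U$ and $u \in \R^m$. A standard mean value argument along segments in $U$ then shows that both $f|_U$ and $(\pi_x \circ f)|_U$ are bilipschitz onto their images.

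To conclude I would invoke the embedding property of $f$: continuity of $f^{-1} \colon f(\R^m) \to \R^m$ yields a $\delta > 0$ with $f(\R^m) \cap B(z_o, \delta) \subset f(U)$. On this set the restriction factors as $\pi_x = (\pi_x \circ f) \circ (f|_U)^{-1}$, a composition of bilipschitz maps, which gives the claim. The lemma is essentially a linearization argument; its main conceptual content is identifying the one direction along which $\pi_x$ collapses at $z_o$ and observing that the hypothesis $x \notin T$ is precisely what keeps this direction transverse to the tangent plane $f'(\x_o)\R^m$. The only mildly subtle step is the passage from pointwise to uniform transversality, which I expect to be the main (but routine) obstacle, handled by continuity of the derivatives combined with the openness of injectivity.
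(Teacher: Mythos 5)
Your proposal is correct, but it takes a genuinely different route from the paper. The paper never differentiates $\pi_{x}$: it works directly with secant vectors, first showing that secants of $M = f(\R^{m})$ near $z_{o}$ are nearly parallel to $T$ (i.e.\ $|P_{T}(y-z)| \geq (1-\epsilon)|y-z|$, using only the first-order expansion of $f$ and continuity of $f'$), then that the rays from $x$ into $B(z_{o},\delta)$ are uniformly transverse to $T$ (i.e.\ $|P_{T}(x-y)| \leq \tau |x-y|$ with $\tau < 1$), and finally deriving by contradiction a lower bound $\gamma(y,z) \geq c|y-z|$ on the visual angle at $x$, which is comparable to $|\pi_{x}(y)-\pi_{x}(z)|$. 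You instead linearize $\pi_{x}$ itself, identify $\ker D\pi_{x}(z_{o}) = \operatorname{span}(z_{o}-x)$, observe that $x \notin T$ is exactly the transversality of this kernel to $f'(\x_{o})\R^{m}$, and conclude via the standard local-injectivity estimate for $C^{1}$ immersions. Your version is shorter and makes the role of the hypothesis $x \notin T$ conceptually transparent; the paper's version is more elementary and self-contained, trading the chain rule for explicit angle geometry. One point to state carefully when you write up the ``standard mean value argument'': the uniform two-sided bounds $c_{1}|u| \leq |Dg(\x)u| \leq c_{2}|u|$ on a convex $U$ do \emph{not} by themselves imply injectivity of $g$ (an arc-length parametrization of a full circle satisfies them), so you must shrink $U$ until $\|Dg(\x) - Dg(\x_{o})\| < c_{1}/2$ throughout and estimate $|g(\y)-g(\z)|$ against the fixed linear map $Dg(\x_{o})(\y-\z)$. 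Continuity of $\x \mapsto D(\pi_{x}\circ f)(\x)$ near $\x_{o}$ (which holds since $f(\x)$ stays away from $x$ there) makes this routine, so this is a presentational caveat rather than a gap.
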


\begin{proof} We may and will assume that $z_{o} = 0$. Write $M := f(\R^{m})$, and fix $y = f(\y), z = f(\z) \in M$. Since $f$ is differentiable at $\z$, say, we have
\begin{align*} y - z & = f(\y) - f(\z) = f'(\z)(\y - \z) + \epsilon(\y - \z)|\y - \z|\\
& = f'(\x_{o})(\y - \z) + [f'(\z) - f'(\x_{o})](\y - \z) + \epsilon(\y - \z)|\y - \z|, \end{align*} 

\noindent where $\epsilon \colon \R^{m} \to \R^{n}$ is, as usual, some function with the property that $\epsilon(h) \to 0$ as $h \to 0$ in $\R^{m}$. Denote the three vectors on the previous line by $t,e_{1}$ and $e_{2}$. Since $\dim f'(\x_{o})\R^{m} = m$, we have the inequality $|t| \geq c|\y - \z|$ for some $c > 0$. On the other hand, choosing $y = f(\y)$ and $z = f(\z)$ close to $z_{o} = 0$ ensures that $\y$ and $\z$ are close to $\x_{o}$, since $f$ is an embedding. It follows that
\begin{displaymath} |e_{1}| \leq \epsilon|\y - \z| \quad \text{and} \quad |e_{2}| \leq \epsilon|\y - \z|, \end{displaymath}

\noindent as soon as $y,z \in B(0,\delta) \cap M$. Above $\epsilon > 0$ can be made arbitrarily small by taking $\delta > 0$ small. Here we also needed the fact that $f$ is continuously differentiable. Denote by $P_{T}$ the orthogonal projection onto $T = f'(\x_{o})\R^{m}$. Since $t \in T$, we may estimate
\begin{align*} |P_{T}(y - z)| & \geq |t| - |P_{T}(e_{1})| - |P_{T}(e_{2})| \geq |t| - |e_{1}| - |e_{2}|\\
& \geq |t| - \epsilon|\y - \z| - \epsilon|\y - \z| \geq |t|(1 - 2\epsilon/c)\\
& = |t|(1 + 2\epsilon/c) \frac{1 - 2\epsilon/c}{1 + 2\epsilon/c} \geq (|t| + \epsilon|\y - \z| + \epsilon|\y - \z|) \frac{1 - 2\epsilon/c}{1 + 2\epsilon/c}\\
& \geq (|t| + |e_{1}| + |e_{2}|)\frac{1 - 2\epsilon/c}{1 + 2\epsilon/c} \geq |y - z|\frac{1 - 2\epsilon/c}{1 + 2\epsilon/c}, \quad y,z \in B(z_{o},\delta) \cap M. \end{align*} 

\noindent The factor of $|y - z|$ tends to one as $\epsilon \to 0$, so this may be re-written more neatly as
\begin{displaymath} |P_{T}(y - z)| \geq (1 - \epsilon)|y - z|, \qquad y,z \in B(0,\delta). \end{displaymath}  

\noindent Next fix $x \notin T$. By making $\delta$ yet smaller if necessary, we may assume that $x - y \notin T$ for any $y \in \bar{B}(0,\delta)$. Hence there exists $\tau < 1$ such that $|P_{T}(x - y)| \leq \tau|x - y|$ for $y \in B(0,\delta)$. Now we claim that there exists $c > 0$ (not necessarily the same as before) such that 
\begin{equation}\label{form1} \gamma(y,z) \geq c|y - z|, \qquad y,z \in B(0,\delta) \cap M, \end{equation}
where $\gamma(y,z)$ is the angle formed by $z - x$ and $y - x$. Assume the contrary and locate $y,z \in B(0,\delta) \cap M$ with $\sin \gamma(y,z) \leq \epsilon|y - z|$. Denote by $w$ the orthogonal projection of $z - x$ onto the line spanned by $y - x$. 
\begin{center}
\includegraphics[scale = 1.3]{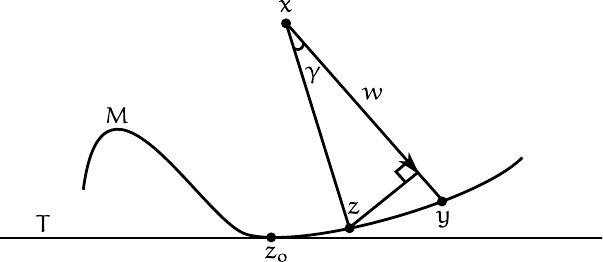}
\end{center}
Then, as the picture shows,
\begin{displaymath} \sin \gamma(y,z) = \frac{|(z - x) - w|}{|x - z|}. \end{displaymath}
Hence $|(z - x) - w| \leq \epsilon|x - z||y - z| \leq C\epsilon|y - z|$. Thus we have shown that $z - x = \lambda(y - x) + v$ for some $\lambda \in \R$ and $|v| \leq C\epsilon|y - z|$. This yields
\begin{displaymath} z - y = x + \lambda(y - x) + v - y = (1 - \lambda)(x - y) + v. \end{displaymath}
Taking $P_{T}$ on both sides and estimating,
\begin{align*} (1 - \epsilon)|y - z| \leq |P_{T}(z - y)| & \leq |(1 - \lambda)||P_{T}(x - y)| + |v|\\
& \leq |(1 - \lambda)| \cdot \tau \cdot |x - y| + |v|\\
& \leq \tau|(1 - \lambda)(x - y) + v| + \tau|v| + |v|\\
& \leq [C \epsilon(1 + \tau) + \tau]|y - z|. \end{align*} 
Since $\epsilon > 0$ could be chosen arbitrarily small and $\tau < 1$, we have reached a contradiction. Thus (\ref{form1}) holds. 

The angle formed by any points $\pi_{x}(y)$ and $\pi_{x}(z)$ on $\S^{n - 1}$ is precisely $\gamma(y,z)$, and this angle is of the same order of magnitude as $|\pi_{x}(y) - \pi_{x}(z)|$. Thus
$$ |\pi_{x}(y) - \pi_{x}(z)| \asymp |\gamma(y,z)| \geq c|y - z|, \qquad y,z \in B(0,\delta) \cap M. $$
Hence $\pi_{x}$ restricted to $B(0,\delta) \cap M$ is bilipschitz. 
\end{proof}

\section{Proof of the main result}

\begin{proof}[Proof of Theorem \ref{thm1}]
Choose a $C^1$-embedding $f_{1} \colon \R^{m} \to \R^{n}$ so that $M_{1} := f_{1}(\R^{m})$ intersects $E$ in a set of positive $\cH^{m}$ measure. Next choose a point $z_1 = f_{1}(\x_{1}) \in E \cap M_1$ such that $\dim f_{1}'(\x_{1})\R^{m} = m$ and $z_{1}$ is a density point of $E \cap M_{1}$. All this is possible by Sard's theorem and the fact that $\cH^{m}$ almost every point of $E \cap M_{1}$ is a density point. Write $T_{1} := z_{1} + f_{1}'(\x_{1})\R^{m}$. By our assumption, $\cH^m(E \setminus T_1) > 0$. Now repeat the previous procedure: choose a $C^1$-embedding $f_{2} \colon \R^{m} \to \R^{n}$ such that $M_{2} := f_{2}(\R^{m})$ intersects $E \setminus T_{1}$ in a set of positive $\cH^{m}$ measure. Also, pick a point $z_2 = f_{2}(\x_{2}) \in (E \setminus T_1) \cap M_2$ such that, again, $\dim f_{2}'(\x_{2})\R^{m} = m$ and $z_{2}$ is a density point of $(E \setminus T_{1}) \cap M_{2}$. Set $T_{2} := z_{2} + f_{2}'(\x_{2})\R^{m}$. Then $T_1 \neq T_2$. 
\begin{center}
\includegraphics[scale = 1]{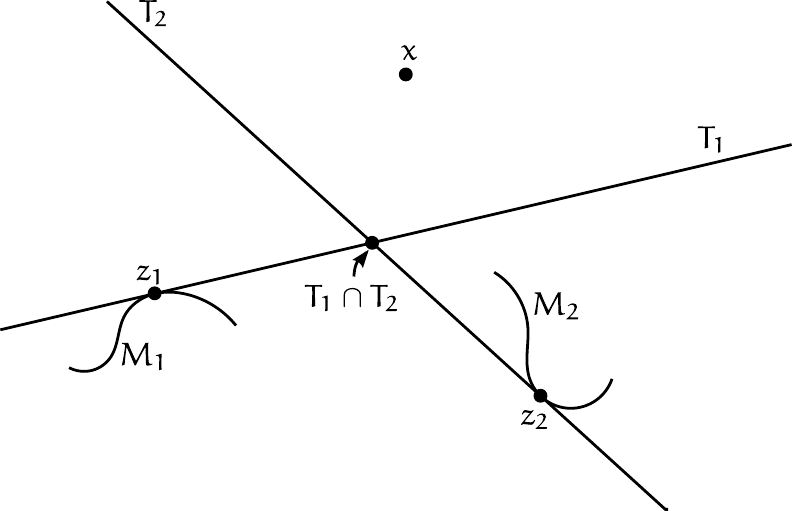}
\end{center}

\noindent Let $x \in \R^n$. If $x \notin T_1$. Then, by Lemma \ref{lemma1}, there exists $\delta > 0$ such that $\pi_x|_{M_{1} \cap B(z_1,\delta)}$ is bilipschitz. Since $E \cap M_1 \cap B(z_1,\delta)$ has positive $\cH^m$ measure, so does $\pi_x(E \cap M_1 \cap B(z_1,\delta))$. Similarly, if $x \notin T_2$, we find $\delta > 0$ such that $\pi_x([E \setminus T_{1}] \cap M_2 \cap B(z_2,\delta))$ has positive $\cH^m$ measure. This implies that the set of all $x \in \R^n$ with $\cH^m(\pi_x(E)) = 0$ is necessarily a subset of $T_1 \cap T_2$, which is always contained in an $(m - 1)$-plane.
\end{proof}

\begin{remark}\label{ims}
In \cite[Conjecture 1.12]{IMS10}, Iosevich \textit{et al.} asked the following: \textit{if $E$ is $(n-1)$-rectifiable and not $(n-1)$-flat, then does the set of directions
$$\cD(E) := \left\{\frac{y-x}{|y-x|} : y,x \in E,\, y \neq x\right\}$$
always have positive $\cH^{n-1}$ measure?} Lemma \ref{lemma1} implies a positive answer: one may even replace $n-1$ by any integer $0 < m < n$. In fact, slightly more is true: if $E \subset \R^{n}$ is an $m$-rectifiable set, which is not $m$-flat, then there exists $x \in E$ such that $\cH^m(\pi_x(E)) > 0$. The proof of this is almost the same as that of Theorem \ref{thm1} -- only simpler. Choose a $C^1$-embedding $f \colon \R^{m} \to \R^{n}$, such that $M := f(\R^{m})$ intersects $E$ in a set of positive $\cH^{m}$ measure. Then choose a point $z_{o} = f(\x_{o}) \in E \cap M$ such that $\dim f'(\x_{o})\R^{m} = m$ and $z_{o}$ is a density point of $E \cap M$. Write $T := z_{o} + f'(\x_{o})\R^{m}$. By our assumption $E \not\subset T$, so we may choose a point $x \in E \setminus T$. Now Lemma \ref{lemma1} yields $\delta > 0$ such that $\pi_x|_{M \cap B(z_o,\delta)}$ is bilipschitz. Since $E \cap M \cap B(z_o,\delta)$ has positive $\cH^m$ measure, so does $\pi_x(E \cap M \cap B(z_o,\delta))$. 
\end{remark}

\section{Radial projections of unrectifiable sets}\label{unrectifiable}

In analogue with the Besicovitch-Federer projection theorem, it seems reasonable to conjecture that the following converse for the result in this note is also true: the radial projections $\pi_x(E)$ of every purely $m$-unrectifiable set $E \subset \R^n$ are $\cH^{m}$ null for $\cH^m$ almost every $x \in E$. This was recently verified for self-similar sets in $\R^{2}$ satisfying the open set condition by Simon and Solomyak in \cite{SiSo06}. In \cite{Mar54, Mat81} Marstrand and Mattila also proved that $\cH^{m}(\pi_{x}(E)) > 0$ can only happen for $x$ in a set of dimension at most $m$. In the plane, at least, this upper bound cannot be improved: Marstrand exhibited an example of a purely $1$-unrectifiable set $E \subset \R^{2}$, which projects radially onto a set of positive length in a set of Hausdorff-dimension exactly $1$. A related phenomenon was discovered by Cs\"{o}rnyei and Preiss in \cite{CsPr07}: given a purely $1$-unrectifiable set $E \subset \R^{2}$ with $\cH^1(E) < \infty$, there exists a $1$-rectifiable set $F \subset \R^{2}$ with $\cH^1(F) < \infty$ such that for $\cH^1$ almost every $x \in E$ almost every line passing through $x$ intersects $F$ in an infinite set.

\end{document}